\documentclass[pra]{revtex4}
 \usepackage{amssymb} \usepackage{graphicx}
\usepackage[utf8]{inputenc}
\usepackage[english]{babel} 
\usepackage{amsthm}

\newtheorem{definition}{Definition}
\newtheorem{proposition}{Proposition}
\newtheorem{theorem}{Theorem}
\newtheorem{corollary}{Corollary}

\begin{document}
 \title{Bilinear forms of K\"{a}hlerian twistor spinors}

\author{\"Umit Ertem}
 \email{umit.ertem@tedu.edu.tr, umitertemm@gmail.com}
\address{TED University, Ziya G\"{o}kalp Caddesi, No:48, 06420, Kolej \c{C}ankaya, Ankara,
Turkey\\}

\begin{abstract}

Generalization of twistor spinors to K\"{a}hler manifolds which are called K\"{a}hlerian twistor spinors are considered. We find the differential equation satisfied by the bilinear forms of K\"{a}hlerian twistor spinors. We show that the bilinear form equation reduces to K\"{a}hlerian conformal Killing-Yano equation under special conditions. We also investigate the special cases of holomorphic and anti-holomorphic K\"{a}hlerian twistor spinors and the bilinear forms of alternative definitions for K\"{a}hlerian twistor spinors.

\end{abstract}

\maketitle

\section{Introduction}

On an $n$-dimensional Riemannian spin manifold $M^n$, a special class of spinors which are in the kernel of the twistor operator can exist and these are called twistor spinors \cite{Lichnerowicz1, Lichnerowicz2, Habermann1, Baum Friedrich Grunewald Kath, Baum Leitner, Ertem1}. They are solutions of the twistor equation
\[
\nabla_X\psi=\frac{1}{n}\widetilde{X}.\displaystyle{\not}D\psi
\]
for any vector field $X$ and its metric dual $\widetilde{X}$. Here, $\psi$ is the spinor, $\displaystyle{\not}D$ is the Dirac operator and $.$ denotes the Clifford multiplication. Because of the conformal covariance of the twistor equation, twistor spinors are related to the conformal symmetries of the manifold \cite{Lichnerowicz2, Ertem1, Ertem3, Ertem4}. On the other hand, K\"{a}hler spin manifolds do not admit nontrivial twistor spinors \cite{Hijazi1, Kirchberg1}. However, one can define a generalization of twistor spinors to K\"{a}hler spin manifolds. By considering the decomposition of the spinor bundle into the eigenbundles of the K\"{a}hler form on a K\"{a}hler manifold, one can define a K\"{a}hlerian twistor operator and the spinors in the kernel of this K\"{a}hlerian twistor operator are called K\"{a}hlerian twistor spinors \cite{Pilca}. K\"{a}hlerian twistor equation for a spinor $\psi$ of type $r$ on a K\"{a}hler manifold $M^{2m}$ is written as
\[
\nabla_X\psi=\frac{m+2}{8(r+1)(m-r+1)}(\widetilde{X}.\displaystyle{\not}D\psi+J\widetilde{X}.\displaystyle{\not}D^c\psi)+\frac{m-2r}{8(r+1)(m-r+1)}i(J\widetilde{X}.\displaystyle{\not}D\psi-\widetilde{X}.\displaystyle{\not}D^c\psi).
\]
where $J$ is the complex structure and $\displaystyle{\not}D^c$ is the conjugate Dirac operator defined in (20). The manifolds admitting nontrivial K\"{a}hlerian twistor spinors are investigated in \cite{Pilca}. Special types of K\"{a}hlerian twistor spinors are also defined previously in \cite{Kirchberg2, Hijazi2}. Moreover, K\"{a}hlerian Killing spinors as the limiting cases of the eigenvalues of the Dirac operator on compact K\"{a}hler manifolds can also be defined as the special cases of K\"{a}hlerian twistor spinors \cite{Kirchberg3, Moroianu1}

From a spin invariant inner product defined on the space of spinors, one can define a squaring map which is used in the construction of differential forms from spinors. These different degree differential forms are called bilinear forms of spinors. For a twistor spinor on a Riemannian manifold, the bilinear forms are related to the conformal hidden symmetries of the manifold. Bilinear forms of twistor spinors correspond to conformal Killing-Yano (CKY) forms which are antisymmetric generalizations of conformal Killing vector fields to higher degree differential forms \cite{Acik Ertem, Ertem1, Ertem2}. So, the bilinear $p$-forms $\omega$ of a twistor spinor satisfy the following CKY equation on $M^n$
\[
\nabla_X\omega=\frac{1}{p+1}i_Xd\omega-\frac{1}{n-p+1}\widetilde{X}\wedge\delta\omega
\]
where $d$ and $\delta$ are exterior derivative and co-derivative operators, $i_X$ is the interior derivative or contraction operator with respect to $X$ and $\wedge$ denotes the wedge product. In this paper, we investigate the construction of bilinear forms for K\"{a}hlerian twistor spinors. We show that the bilinear forms of K\"{a}hlerian twistor spinors satisfy a more general differential equation given in Theorem 1 which can reduce to the K\"{a}hlerian CKY equation in some special cases. Properties of K\"{a}hlerian CKY forms are studied in \cite{Moroianu Semmelmann}. Moreover, we also consider the bilinear forms of holomorphic and anti-holomorphic K\"{a}hlerian twistor spinors and also the special K\"{a}hlerian twistor spinor definitions of \cite{Kirchberg2} and \cite{Hijazi2}.

The paper is organized as follows. In Section II, we give the basic definitions and operators on K\"{a}hler manifolds. One can find more details on K\"{a}hler manifolds in \cite{Moroianu2}. Section III includes the spin geometry of K\"{a}hler manifolds and the definitions of K\"{a}hlerian twistor spinors. In Section IV, we construct the bilinear forms of Riemannian and K\"{a}hlerian twistor spinors and prove the theorem for the bilinear form equation of K\"{a}hlerian twistor spinors. Section V includes the special cases of holomorphic and anti-holomorphic K\"{a}hlerian twistor spinors and the special definitions of \cite{Kirchberg2} and \cite{Hijazi2}.

\section{Operators on K\"{a}hler manifolds}

Let us consider a $n=2m$ dimensional K\"{a}hler manifold $(M^{2m}, g, J)$ with a Riemannian metric $g$ and a complex structure $J$ satisfying $J^2=-I$ where $I$ is the identity. Hence, $J$ is integrable and satisfies $\nabla_XJ=0$ for any vector field $X$, namely the Nijenhuis tensor $N^J$ of $J$ vanishes. $g$ and $J$ have the relation $g(JX, JY)=g(X,Y)$ for any vector fields $X$ and $Y$.

The complex structure $J$ induces a splitting on the complexified tangent bundle $TM^{\mathbb{C}}=TM\otimes_{\mathbb{R}}\mathbb{C}$;
\[
TM^{\mathbb{C}}=TM^+\oplus TM^-.
\]
Here, $TM^+$ corresponds to $+i$ eigenspace of $J$ and $TM^-$ corresponds to $-i$ eigenspace of $J$. For any $X\in TM^{\mathbb{C}}$, we have $X=X^++X^-$ and $X^+\in TM^+$ and $X^-\in TM^-$ are defined as follows
\[
X^+=\frac{1}{2}(X-iJX)\quad\quad , \quad\quad X^-=\frac{1}{2}(X+iJX).
\]
Then, we can define an orthonormal basis $\{X_a, JX_a\}$ for $TM^{\mathbb{C}}$ with $a=1,2,...,m$. Similarly, the above splitting also induce a splitting on the cotangent bundle $T^*M^{\mathbb{C}}=T^*M\otimes_{\mathbb{R}}\mathbb{C}$ and we can define an orthonormal basis $\{e^a, Je^a\}$ on the cotangent bundle with $a=1,2,...,m$. We have the following relations between the frame and coframe basis;
\begin{eqnarray}
e^a(X_b)&=&g(X^a, X_b)=\delta^a_b\nonumber\\
Je^a(JX_b)&=&g(JX^a, JX_b)=g(X^a, X_b)=\delta^a_b\nonumber\\
e^a(JX_b)&=&g(X^a, JX_b)=-g(JX^a, X_b)\nonumber\\
Je^a(X_b)&=&g(JX^a, X_b)=-g(X^a, JX_b)\nonumber
\end{eqnarray}
where $\delta^a_b$ denotes the Kronecker delta.

The above splitting of tangent and cotangent bundles can also be generalized to complex tensor bundles on $M$. In particular, for the exterior bundle $\Lambda M^{\mathbb{C}}=\Lambda M\otimes_{\mathbb{R}}\mathbb{C}$, we have the following splitting;
\[
\Lambda^rM^{\mathbb{C}}=\bigoplus_{p+q=r}\Lambda^pM^+\otimes\Lambda^qM^-.
\]
The elements of $\Lambda^{p,q}M^{\mathbb{C}}:=\Lambda^pM^+\otimes\Lambda^qM^-$ are called as the forms of type $(p,q)$. So, a complex $r$-form is a sum of $(p,q)$-forms with $p+q=r$.

A closed (1,1)-form $\Omega$ which is written in terms of coframe basis as 
\begin{equation}
\Omega=\frac{1}{2}e^a\wedge Je_a
\end{equation}
is called the K\"{a}hler 2-form of $M$ and satisfies $d\Omega=0$ where $d$ is the exterior derivative operator. It can also be written in terms of the metric as $\Omega(X,Y):=g(JX,Y)$ for any vector fields $X$ and $Y$. The exterior derivative of a $(p,q)$-form is written as a sum of $(p+1, q)$ and $(p, q+1)$-forms. So, the exterior derivative operator $d$ acts as
\[
d:\Lambda^{p,q}M^{\mathbb{C}}\longrightarrow\Lambda^{p+1,q}M^{\mathbb{C}}\oplus\Lambda^{p,q+1}M^{\mathbb{C}}.
\]
Similarly, the coderivative operator $\delta$ acts as
\[
\delta:\Lambda^{p,q}M^{\mathbb{C}}\longrightarrow\Lambda^{p-1,q}M^{\mathbb{C}}\oplus\Lambda^{p,q-1}M^{\mathbb{C}}.
\]
On the other hand, the interior derivative operator $i_X$ with respect to a vector field $X$ and $i_{JX}$ with respect to a vector field $JX$ correspond to contractions of $(p,q)$-forms with respect to these vector fields and act as follows
\begin{eqnarray}
i_X:\Lambda^{p,q}M^{\mathbb{C}}\longrightarrow\Lambda^{p-1,q}M^{\mathbb{C}}\nonumber\\
i_{JX}:\Lambda^{p,q}M^{\mathbb{C}}\longrightarrow\Lambda^{p,q-1}M^{\mathbb{C}}\nonumber
\end{eqnarray}
and they satisfy the following relations;
\begin{equation}
i_{JX}=Ji_X\quad\quad , \quad\quad Ji_{JX}=-i_X.
\end{equation}
For zero torsion, the exterior derivative and coderivative operators can be written in terms of the covariant derivative $\nabla_X$ with respect to a vector field $X$ as follows;
\begin{eqnarray}
d&=&e^a\wedge\nabla_{X_a}\\
\delta&=&-i_{X^a}\nabla_{X_a}.
\end{eqnarray}

By using the K\"{a}hler form $\Omega$ given in (1), we can also define two operators acting on $(p,q)$-forms on $M$. These are the wedge product and contraction of a $(p,q)$-form $\alpha$ with $\Omega$ and defined as follows;
\begin{eqnarray}
L\alpha&:=&\Omega\wedge\alpha=\frac{1}{2}e^a\wedge Je_a\wedge\alpha\\
\Lambda\alpha&:=&i_{\Omega}\alpha=\frac{1}{2}i_{JX^a}i_{X_a}\alpha
\end{eqnarray}
and they act as
\begin{eqnarray}
L&:&\Lambda^{p,q}M^{\mathbb{C}}\longrightarrow\Lambda^{p+1,q+1}M^{\mathbb{C}}\nonumber\\
\Lambda&:&\Lambda^{p,q}M^{\mathbb{C}}\longrightarrow\Lambda^{p-1,q-1}M^{\mathbb{C}}.\nonumber
\end{eqnarray}
If a $(p,q)$-form $\alpha$ is in the kernel of the operator $\Lambda$, that is $\Lambda\alpha=0$, then $\alpha$ is called as a primitive form.

The complex structure $J$ can also be stated as a derivation on the space of $(p,q)$-forms acting as
\[
J:\Lambda^{p,q}M^{\mathbb{C}}\longrightarrow\Lambda^{p-1,q+1}M^{\mathbb{C}}.
\]
For any $\alpha\in\Lambda^{p,q}M^{\mathbb{C}}$, it can be written as
\begin{equation}
J\alpha:=Je^a\wedge i_{X_a}\alpha
\end{equation}
and it has the property
\begin{equation}
J(\alpha\wedge\beta)=J\alpha\wedge\beta+\alpha\wedge J\beta
\end{equation}
where $\beta\in\Lambda^{p,q}M^{\mathbb{C}}$. Moreover, $J$ commutes with both the operators $L$ and $\Lambda$;
\begin{equation}
[J,\Lambda]=0=[J,L].
\end{equation}

The operator $\Lambda$ commutes with the interior derivative operations $i_X$ and $i_{JX}$;
\begin{equation}
[i_X,\Lambda]=0\quad\quad , \quad\quad [i_{JX},\Lambda]=0
\end{equation}
and the commutation relations between the operator $L$ and the interior derivative operations are given by
\begin{equation}
[i_X, L]=J\widetilde{X}\wedge\quad\quad , \quad\quad [i_{JX}, L]=-\widetilde{X}\wedge
\end{equation}
where the 1-form $\widetilde{X}$ is the metric dual of the vector field $X$.

The generalizations of the operators $d$ and $\delta$ given in (3) and (4) to include the complex structure $J$ can also be constructed in the following way;
\begin{eqnarray}
d^c&=&Je^a\wedge\nabla_{X_a}\\
\delta^c&=&-i_{JX^a}\nabla_{X_a}
\end{eqnarray}
and the commutation relations between the operators $d$, $d^c$, $\delta$ and $\delta^c$ and the operators $L$ and $\Lambda$ are given as follows;
\begin{equation}
[L, d]=0=[L, d^c]\quad\quad , \quad\quad [\Lambda, \delta]=0=[\Lambda, \delta^c]
\end{equation}
\begin{equation}
[L, \delta]=d^c\quad , \quad [L, \delta^c]=-d\quad , \quad [\Lambda, d]=-\delta^c\quad , \quad [\Lambda, d^c]=\delta.
\end{equation}

\section{K\"{a}hlerian twistor spinors}

Let the K\"{a}hler manifold $M^{2m}$ admits a spin structure and $\Sigma M$ denotes the spinor bundle on $M$. The complex volume form on $M^{2m}$ is defined by
\begin{equation}
z^{\mathbb{C}}=i^m\prod_{i=1}^me_i.Je_i
\end{equation}
where $.$ denotes the Clifford multiplication. Since the dimension of $M$ is even, $z^{\mathbb{C}}$ has +1 and -1 eigenvalues and the spinor bundle $\Sigma M$ splits into a direct sum of spaces containing different eigenvalue eigenspinors of $z^{\mathbb{C}}$;
\[
\Sigma M=\Sigma^+ M\oplus\Sigma^- M.
\]
The K\"{a}hler form $\Omega$ given in (1) can be written in terms of Clifford multiplication as
\begin{equation}
\Omega=\frac{1}{2}e^a.Je_a
\end{equation}
and under the action of $\Omega$, the spinor bundle $\Sigma M$ splits into different eigenbundles of $\Omega$;
\begin{equation}
\Sigma M=\bigoplus_{r=0}^m\Sigma_rM
\end{equation}
where each $\Sigma_rM$ is the eigenbundle of $\Omega$ corresponding to the eigenvalue $i(2r-m)$. The rank of each eigenbundle is given by $\mathrm{rank}_{\mathbb{C}}(\Sigma_rM)={m \choose r}$. The elements of $\Sigma_rM$ are called K\"{a}hlerian spinors of type $r$. The eigenbundles of the complex volume form $z^{\mathbb{C}}$ also splits into the eigenbundles of $\Omega$ and we have
\begin{eqnarray}
\Sigma^+ M&=&\bigoplus_{0 \leq r \leq 2m}\Sigma_rM\quad,\quad\text{for $r$ even}\nonumber\\
\Sigma^- M&=&\bigoplus_{0 \leq r \leq 2m}\Sigma_rM\quad,\quad\text{for $r$ odd}\nonumber
\end{eqnarray}

The Levi-Civita connection $\nabla$ defined on $\Lambda M^{\mathbb{C}}$ can be induced on the spinor bundle $\Sigma M$ and it preserves the above splitting of the spinor bundle. The Dirac operator on $\Sigma M$ is defined as follows;
\begin{equation}
\displaystyle{\not}D:=e^a.\nabla_{X_a}
\end{equation}
and the conjugate Dirac operator is given by
\begin{equation}
\displaystyle{\not}D^c:=Je^a.\nabla_{X_a}.
\end{equation}
Both are square roots of the Laplacian and have the property $\displaystyle{\not}D^2=(\displaystyle{\not}D^c)^2$. Moreover, one can also define the operators
\begin{equation}
\displaystyle{\not}D^+=\frac{1}{2}(\displaystyle{\not}D-i\displaystyle{\not}D^c)\quad\quad , \quad\quad\displaystyle{\not}D^-=\frac{1}{2}(\displaystyle{\not}D+i\displaystyle{\not}D^c)
\end{equation}
and we have $\displaystyle{\not}D=\displaystyle{\not}D^+ +\displaystyle{\not}D^-$ and $\displaystyle{\not}D^c=i(\displaystyle{\not}D^+ -\displaystyle{\not}D^-)$. The Dirac operator $\displaystyle{\not}D$ acts on the subbundle $\Sigma_rM$ as
\[
\displaystyle{\not}D:\Sigma_rM\longrightarrow\Sigma_{r-1}M\oplus\Sigma_{r+1}M
\]
The metric duals of the vector fields $X^+$ and $X^-$ defined in Section II also act on the subbundle $\Sigma_rM$ and their action change the subbundles as follows;
\[
\widetilde{X}^+.\Sigma_rM\subseteq\Sigma_{r+1}M\quad\quad , \quad\quad \widetilde{X}^-.\Sigma_rM\subseteq\Sigma_{r-1}M.
\]
In general, the Clifford multiplication of a 1-form with an element of $\Sigma_rM$ gives an element of $\Sigma_{r-1}M\oplus\Sigma_{r+1}M$.

We define special types of spinors satisfying some differential equations in terms of $\nabla$, $\displaystyle{\not}D$ and $\displaystyle{\not}D^c$.
\begin{definition}
Let $M$ be a $n$-dimensional Riemannian spin manifold and $\psi\in\Sigma M$ is a spinor field on $M$. If $\psi$ satisfies the following equation for all vector fields $X\in TM$ and their metric duals $\widetilde{X}\in T^*M$
\begin{equation}
\nabla_X\psi=\frac{1}{n}\widetilde{X}.\displaystyle{\not}D\psi,
\end{equation}
then $\psi$ is called as a Riemannian twistor spinor.
\end{definition}
In the case of K\"{a}hler manifolds, the definition of twistor spinors can be generalized as follows;
\begin{definition}
Let ($M^{2m}$, $g$, $J$) be a $2m$-dimensional K\"{a}hler manifold. If $\psi\in\Sigma_rM$ is a spinor of type $r$ and satisfies the following equalities
\begin{eqnarray}
\nabla_{X^+}\psi&=&\frac{1}{2(m-r+1)}\widetilde{X}^+.\displaystyle{\not}D^-\psi\\
\nabla_{X^-}\psi&=&\frac{1}{2(r+1)}\widetilde{X}^-.\displaystyle{\not}D^+\psi
\end{eqnarray}
for all $X=X^+ + X^-\in TM^{\mathbb{C}}$, then $\psi$ is called as a K\"{a}hlerian twistor spinor. The defining equations above can be combined into a single equation for K\"{a}hlerian twistor spinors as in the following form
\begin{equation}
\nabla_X\psi=\frac{m+2}{8(r+1)(m-r+1)}(\widetilde{X}.\displaystyle{\not}D\psi+J\widetilde{X}.\displaystyle{\not}D^c\psi)+\frac{m-2r}{8(r+1)(m-r+1)}i(J\widetilde{X}.\displaystyle{\not}D\psi-\widetilde{X}.\displaystyle{\not}D^c\psi).
\end{equation}
\end{definition}
From the equations (23) and (24), some special types of K\"{a}hlerian twistor spinors can also be defined;
\begin{definition}
If a K\"{a}hlerian twistor spinor $\psi$ is in the kernel of the operator $\displaystyle{\not}D^+$, that is $\displaystyle{\not}D^+\psi=0$, then it is called as a holomorphic K\"{a}hlerian twistor spinor and is parallel with respect to all $X^-\in TM^-$. Similarly, if a K\"{a}hlerian twistor spinor $\psi$ is in the kernel of the operator $\displaystyle{\not}D^-$, that is $\displaystyle{\not}D^-\psi=0$, then it is called as an anti-holomorphic K\"{a}hlerian twistor spinor and is parallel with respect to all $X^+\in TM^+$.
\end{definition}

Special cases of Definition 2 are considered as the definition of K\"{a}hlerian twistor spinors in the literature. By considering any real numbers $a$ and $b$ and taking the first term on the right hand side of (25), one can write the equality
\begin{equation}
\nabla_X\psi=a\widetilde{X}.\displaystyle{\not}D\psi+bJ\widetilde{X}.\displaystyle{\not}D^c\psi
\end{equation}
which is the definition of K\"{a}hlerian twistor spinors given by Hijazi \cite{Hijazi2}. If we choose the real numbers $a=b=\frac{1}{4(r+1)}$ for $0\leq r\leq m-2$, we have
\begin{equation}
\nabla_X\psi=\frac{1}{4r}(\widetilde{X}.\displaystyle{\not}D\psi+J\widetilde{X}.\displaystyle{\not}D^c\psi)
\end{equation}
and this is the definition of K\"{a}hlerian twistor spinors given by Kirchberg \cite{Kirchberg2}. If we choose $a=\frac{1}{n}$ and $b=0$ in (26), then we obtain the Riemannian twistor equation given in (22).

For a K\"{a}hler manifold $M^{2m}$, if $m$ is even and the type of the K\"{a}hlerian twistor spinor is $r=\frac{m}{2}$, then the K\"{a}hlerian twistor equations (23) and (24) reduce to
\begin{equation}
\nabla_X\psi=\frac{1}{m+2}(\widetilde{X}^+.\displaystyle{\not}D^-\psi+\widetilde{X}^-.\displaystyle{\not}D^+\psi).
\end{equation}
If $M^{2m}$ is a compact K\"{a}hler spin manifold of positive constant scalar curvature, then (28) does not have a non-trivial solution and there are no K\"{a}hlerian twistor spinors of middle dimension type \cite{Pilca}. For a connected K\"{a}hler spin manifold $M^{2m}$, the dimension of the space of K\"{a}hlerian twistor spinors of type $r$, which is denoted by $KT(r)$, is bounded by \cite{Pilca}
\[
\dim(KT(r))\leq {m \choose r}+{m \choose r+1}+{m \choose r-1}.
\]

The classification of manifolds admitting K\"{a}hlerian twistor spinors are investigated in \cite{Pilca}. For a compact K\"{a}hlerian spin manifold $M^{2m}$ of positive constant scalar curvature, a K\"{a}hlerian twistor spinor $\psi\in\Sigma_rM$ of type $r$ is an anti-holomorphic K\"{a}hlerian twistor spinor if $r<\frac{m}{2}$ or a holomorphic K\"{a}hlerian twistor spinor if $r>\frac{m}{2}$. As a special case, weakly Bochner flat manifolds admit K\"{a}hlerian twistor spinors \cite{Pilca}.

\section{Bilinear forms}

For a spinor $\psi\in\Sigma M$, a dual spinor $\overline{\psi}\in\Sigma^*M$ can be defined by using the spin invariant inner product $(\, , \,)$ on $\Sigma M$. For $\psi, \phi\in\Sigma M$
\begin{eqnarray}
\Sigma^*M\times\Sigma M&\longrightarrow&\mathbb{F}\nonumber\\
\overline{\psi},\phi&\longmapsto&\overline{\psi}(\phi)=(\psi,\phi)\nonumber
\end{eqnarray}
where $\mathbb{F}$ is the division algebra on which the Clifford algebra is defined. The tensor product of spinors and dual spinors $\Sigma M\otimes\Sigma^*M$ acts on $\Sigma M$ by the Clifford multiplication. For $\psi,\phi,\kappa\in\Sigma M$ and $\overline{\phi}\in\Sigma^*M$, we have
\begin{equation}
(\psi\overline{\phi}).\kappa=(\phi,\kappa)\psi
\end{equation}
where we denote $\psi\otimes\overline{\phi}=\psi\overline{\phi}$. So, the elements of $\Sigma M\otimes\Sigma^*M$ can be regarded as linear transformations on $\Sigma M$ and hence they can be identified with the elements of the Clifford bundle $Cl(M)$. Then, for an orthonormal basis $\{e^a\}$, we can write $\psi\overline{\phi}$ as a sum of different degree differential forms as follows
\begin{eqnarray}
\psi\overline{\phi}&=&(\phi,\psi)+(\phi,e_a.\psi)e^a+(\phi,e_{ba}.\psi)e^{ab}+...\nonumber\\
&&+(\phi,e_{a_p...a_2a_1}.\psi)e^{a_1a_2...a_p}+...+(-1)^{\lfloor n/2\rfloor}(\phi, z.\psi)z
\end{eqnarray}
where $e^{a_1a_2...a_p}=e^{a_1}\wedge e^{a_2}\wedge...\wedge e^{a_p}$, $n$ is the dimension of the manifold and $z$ is the volume form. If we choose $\psi=\phi$, each term on the right hand side of (30) are called spinor bilinears of $\psi$ and the $p$-form bilinear is defined as
\begin{equation}
(\psi\overline{\psi})_p=(\psi,e_{a_p...a_2a_1}.\psi)e^{a_1a_2...a_p}.
\end{equation}
The inner product $(\, , \,)$ is said to have $\mathcal{J}$ involution with the following equality
\begin{equation}
(\phi, \omega.\psi)=(\omega^{\mathcal{J}}.\phi, \psi)
\end{equation}
for any inhomogeneous Clifford form $\omega$ and $\mathcal{J}$ can be $\xi$, $\xi^*$, $\xi\eta$ and $\xi\eta^*$ with $\xi$ and $\eta$ acts on a $p$-form $\alpha$ as $\alpha^{\xi}=(-1)^{\lfloor p/2\rfloor}\alpha$ and $\alpha^{\eta}=(-1)^p\alpha$ where $\lfloor\,\rfloor$ is the floor function taking the integral part of the argument and $^*$ is the complex conjugation. Moreover, for any Clifford forms $\alpha$ and $\beta$, we have $(\alpha.\beta)^{\xi}=\beta^{\xi}.\alpha^{\xi}$.

\begin{proposition}
For a Riemannian twistor spinor $\psi\in\Sigma M$ on a $n$-dimensional spin manifold $M$ which satisfies (22), all the $p$-form bilinears $(\psi\overline{\psi})_p$ satisfies the following conformal Killing-Yano (CKY) equation
\begin{equation}
\nabla_X(\psi\overline{\psi})_p=\frac{1}{p+1}i_Xd(\psi\overline{\psi})_p-\frac{1}{n-p+1}\widetilde{X}\wedge\delta(\psi\overline{\psi})_p
\end{equation}
for all vector fields $X\in TM$ and their metric duals $\widetilde{X}\in T^*M$.
\end{proposition}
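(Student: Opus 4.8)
The plan is to compute $\nabla_X(\psi\overline{\psi})_p$ directly, substitute the twistor equation (22), and recognise the result as the right-hand side of (33). I would fix a point of $M$ and work in a local orthonormal coframe $\{e^a\}$ that is $\nabla$-parallel there, so that the basis covectors commute with $\nabla_X$ and so that $\sum_a e^a\wedge\widetilde{X_a}=0$ and $\sum_a g(X^a,X_a)=n$; both sides of (33) are tensorial, so checking at such a point suffices. Using the Leibniz rule together with the metric compatibility of the spin connection with the pairing $(\,,\,)$, one obtains
\[
\nabla_X(\psi\overline{\psi})_p=\big[(\nabla_X\psi,e_{a_p\ldots a_1}.\psi)+(\psi,e_{a_p\ldots a_1}.\nabla_X\psi)\big]e^{a_1\ldots a_p},
\]
and inserting $\nabla_X\psi=\frac1n\widetilde{X}.\displaystyle{\not}D\psi$ produces two bilinear terms in each of which a single $1$-form $\widetilde{X}$ is Clifford-multiplied onto $e_{a_p\ldots a_1}$ — from the left in the first term (after moving it across with (32), which gives $\widetilde{X}^{\mathcal J}=\pm\widetilde{X}$), and from the right in the second.

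Next I would decompose the Clifford products $\widetilde{X}.e_{a_p\ldots a_1}$ and $e_{a_p\ldots a_1}.\widetilde{X}$ into their exterior and interior parts $\widetilde{X}\wedge e_{a_p\ldots a_1}\pm i_Xe_{a_p\ldots a_1}$, the signs being fixed by the parity of $p$ and the involution type $\mathcal J\in\{\xi,\xi^*,\xi\eta,\xi\eta^*\}$. Grouping terms of equal form degree, and using that the two $(p+1)$-form bilinears $(\displaystyle{\not}D\psi,\mu.\psi)$ and $(\psi,\mu.\displaystyle{\not}D\psi)$ are proportional via (32) and the (anti)symmetry of $(\,,\,)$ — and likewise the two $(p-1)$-form bilinears — the whole expression collapses to the structural form
\[
\nabla_X(\psi\overline{\psi})_p=\frac1n\Big(c_1\,\widetilde{X}\wedge\beta_{p-1}+c_2\,i_X\gamma_{p+1}\Big),
\]
where $\gamma_{p+1}=(\displaystyle{\not}D\psi,e_{b_{p+1}\ldots b_1}.\psi)e^{b_1\ldots b_{p+1}}$, $\beta_{p-1}=(\displaystyle{\not}D\psi,e_{b_{p-1}\ldots b_1}.\psi)e^{b_1\ldots b_{p-1}}$, and $c_1,c_2$ are explicit constants depending only on $n$, $p$ and $\mathcal J$. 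The key structural fact is that no mixed $\widetilde{X}\wedge i_X$ term can survive, because a Clifford product of a $1$-form with a $p$-form always splits into pieces of definite degree $p\pm1$.

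To close the argument I would identify $\gamma_{p+1}$ with $d(\psi\overline{\psi})_p$ and $\beta_{p-1}$ with $\delta(\psi\overline{\psi})_p$. Applying $d=e^a\wedge\nabla_{X_a}$ and $\delta=-i_{X^a}\nabla_{X_a}$ from (3) and (4) to the structural formula, and using $\sum_a e^a\wedge\widetilde{X_a}=0$, $\sum_a i_{X^a}i_{X_a}=0$, the Euler identity $\sum_a e^a\wedge i_{X_a}\omega=(\deg\omega)\,\omega$ and $\sum_a i_{X^a}(\widetilde{X_a}\wedge\omega)=(n-\deg\omega)\,\omega$, one finds $d(\psi\overline{\psi})_p=\frac{c_2(p+1)}{n}\gamma_{p+1}$ and $\delta(\psi\overline{\psi})_p=-\frac{c_1(n-p+1)}{n}\beta_{p-1}$. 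Substituting these two relations back into the structural formula makes $c_1$ and $c_2$ cancel identically and yields (33).

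The main obstacle is the Clifford-algebra bookkeeping in the middle step: tracking the degree-dependent signs generated by $(\phi,\omega.\psi)=(\omega^{\mathcal J}.\phi,\psi)$ together with $(\alpha.\beta)^{\xi}=\beta^{\xi}.\alpha^{\xi}$, and verifying that for each of the four admissible involutions the $(p+1)$-form (respectively $(p-1)$-form) bilinears coming from the two terms genuinely add up to a single multiple of $\gamma_{p+1}$ (respectively $\beta_{p-1}$), with no leftover independent contribution. Once that is established the final identification with $d$ and $\delta$ is routine, and — conveniently — the precise values of $c_1$ and $c_2$ never enter the conclusion.
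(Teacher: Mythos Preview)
The paper does not actually prove this proposition; it simply refers the reader to \cite{Acik Ertem}. Your approach is correct and is precisely the Riemannian specialisation of the method the paper itself uses in its proof of Theorem~1 for the K\"{a}hlerian case: compute $\nabla_X$ of the bilinear via Leibniz, substitute the twistor equation, expand Clifford products $e_a.\omega$ and $\omega.e_a$ into wedge and interior pieces using (46), then apply $e^a\wedge$ and $-i_{X^a}$ to the structural formula and use the Euler-type identities to recognise $d$ and $\delta$. The only stylistic difference is that the paper (in Theorem~1) works with the full Clifford element $\psi\overline{\psi}$ and the operator $\displaystyle{\not}d=e^a.\nabla_{X_a}$, introducing the auxiliary identity $(\nabla_{X_b}\psi)\overline{\psi}=\nabla_{X_b}(\psi\overline{\psi})-\psi\overline{\nabla_{X_b}\psi}$, whereas you work componentwise with the $p$-form projections; the two computations are equivalent. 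Your observation that the precise values of $c_1,c_2$ cancel in the end is exactly the mechanism by which the coefficients $\frac{1}{p+1}$ and $\frac{1}{n-p+1}$ emerge independently of the choice of involution $\mathcal{J}$.
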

The proof of the proposition can be found in \cite{Acik Ertem}. CKY forms correspond to antisymmetric generalizations of conformal Killing vector fields to higher degree differential forms.

On a K\"{a}hler manifold $M^{2m}$, the tensor product of spinors and dual spinors can also be written as a sum of different degree differential forms. Because of the decomposition of the exterior bundle given in Section II, the sum is written in terms of $(p,q)$-forms. For a spinor $\psi\in\Sigma M$ and its dual $\overline{\psi}\in\Sigma^*M$, we have the following decomposition
\begin{eqnarray}
\psi\overline{\psi}&=&(\psi, \psi)+(\psi, e_a.\psi)e^a+(\psi, Je_a.\psi)Je^a+(\psi, (e_b\wedge e_a).\psi)e^a\wedge e^b\nonumber\\
&&+(\psi, (e_b\wedge Je_a).\psi)Je^a\wedge e^b+(\psi, (Je_b\wedge Je_a).\psi)Je^a\wedge Je^b+ ... \nonumber\\
&&+(\psi, (e_{a_p}\wedge...\wedge e_{a_1}\wedge Je_{b_q}\wedge...\wedge Je_{b_1}).\psi)Je^{b_1}\wedge...\wedge Je^{b_q}\wedge e^{a_1}\wedge...\wedge e^{a_p}+...\\
&&+(\psi, (e_{a_m}\wedge...\wedge e_{a_1}\wedge Je_{b_m}\wedge...\wedge Je_{b_1}).\psi)Je^{b_1}\wedge...\wedge Je^{b_m}\wedge e^{a_1}\wedge...\wedge e^{a_m}.\nonumber
\end{eqnarray}
So, we can define the $(p,q)$-form bilinears of a spinor $\psi$ as
\begin{equation}
(\psi\overline{\psi})_{(p,q)}=(\psi, (e_{a_p}\wedge...\wedge e_{a_1}\wedge Je_{b_q}\wedge...\wedge Je_{b_1}).\psi)Je^{b_1}\wedge...\wedge Je^{b_q}\wedge e^{a_1}\wedge...\wedge e^{a_p}.
\end{equation}

\begin{theorem}
For a K\"{a}hlerian twistor spinor $\psi\in\Sigma M$ of type $r$ on a $2m$-dimensional K\"{a}hlerian spin manifold $M^{2m}$ which satisfies (25), the $(p,q)$-form bilinears $(\psi\overline{\psi})_{(p,q)}$ satisfies the following equation
\begin{eqnarray}
\nabla_{X_a}(\psi\overline{\psi})_{(p,q)}&=&\frac{1}{p+1}i_{X_a}\bigg(d(\psi\overline{\psi})_{(p,q)}-2L\alpha_{(p,q-1)}-2J\beta_{(p,q+1)}\bigg)\nonumber\\
&&-\frac{1}{m-p+1}e_a\wedge\bigg(\delta(\psi\overline{\psi})_{(p,q)}-J\alpha_{(p,q-1)}-2\Lambda\beta_{(p,q+1)}\bigg)\nonumber\\
&&+\frac{1}{q+1}i_{JX_a}\bigg(d^c(\psi\overline{\psi})_{(p,q)}+2L\gamma_{(p-1,q)}-2J\mu_{(p+1,q)}\bigg)\\
&&-\frac{1}{m-q+1}Je_a\wedge\bigg(\delta^c(\psi\overline{\psi})_{(p,q)}-J\gamma_{(p-1,q)}-2\Lambda\mu_{(p+1,q)}\bigg).\nonumber
\end{eqnarray}
Here, the forms $\alpha_{(p,q-1)}$, $\beta_{(p,q+1)}$, $\gamma_{(p-1,q)}$ and $\mu_{(p+1,q)}$ are defined as follows
\begin{eqnarray}
\alpha_{(p,q-1)}&=&\big(k\mathcal{A}+l\mathcal{B}\big)_{(p,q-1)}\\
\beta_{(p,q+1)}&=&\big(k\mathcal{C}+l\mathcal{D}\big)_{(p,q+1)}\\
\gamma_{(p-1,q)}&=&\big(k\mathcal{B}-l\mathcal{A}\big)_{(p-1,q)}\\
\mu_{(p+1,q)}&=&\big(k\mathcal{D}-l\mathcal{C}\big)_{(p+1,q)}
\end{eqnarray}
where
\begin{eqnarray}
\mathcal{A}&=&\displaystyle{\not}d^c(\psi\overline{\psi})-2Je^b\wedge(\psi\overline{\nabla_{X_b}\psi})\\
\mathcal{B}&=&\displaystyle{\not}d(\psi\overline{\psi})-2e^b\wedge(\psi\overline{\nabla_{X_b}\psi})\\
\mathcal{C}&=&\displaystyle{\not}d^c(\psi\overline{\psi})-2i_{JX^b}(\psi\overline{\nabla_{X_b}\psi})\\
\mathcal{D}&=&\displaystyle{\not}d(\psi\overline{\psi})-2i_{X^b}(\psi\overline{\nabla_{X_b}\psi})
\end{eqnarray}
and
\begin{equation}
k:=\frac{m+2}{8(r+1)(m-r+1)}\quad\quad , \quad\quad l:=\frac{m-2r}{8(r+1)(m-r+1)}.
\end{equation}
\end{theorem}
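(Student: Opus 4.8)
The plan is to differentiate the squaring map and feed in the twistor equation. Because $\nabla J=0$, the Levi-Civita connection preserves the $(p,q)$-decomposition of $\psi\overline{\psi}$, so $\nabla_{X_a}(\psi\overline{\psi})_{(p,q)}$ is the bidegree-$(p,q)$ part of $(\nabla_{X_a}\psi)\overline{\psi}+\psi\overline{\nabla_{X_a}\psi}$. In the first term I substitute the combined K\"{a}hlerian twistor equation (25) and use $(\omega.\chi)\overline{\psi}=\omega.(\chi\overline{\psi})$, so that its four summands become Clifford products of $\widetilde{X_a}$ and $J\widetilde{X_a}$ acting on the left of $({\not}D\psi)\overline{\psi}$ and $({\not}D^c\psi)\overline{\psi}$. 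In the second term I apply the $\mathcal{J}$-involution (32) in the form $\psi\overline{\omega.\chi}=(\psi\overline{\chi}).\omega^{\mathcal{J}}$; substituting (25) yields the same Clifford products, now acting on the right of $\psi\overline{\psi}$, plus the four combinations $e^b\wedge(\psi\overline{\nabla_{X_b}\psi})$, $i_{X^b}(\psi\overline{\nabla_{X_b}\psi})$, $Je^b\wedge(\psi\overline{\nabla_{X_b}\psi})$ and $i_{JX^b}(\psi\overline{\nabla_{X_b}\psi})$, which cannot be folded back into $\nabla_{X_b}(\psi\overline{\psi})$; these are precisely the data collected in the auxiliary forms $\mathcal{A},\mathcal{B},\mathcal{C},\mathcal{D}$, and the weights $k,l$ are carried over unchanged from (25).

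Next I rewrite every Clifford product with a $1$-form as a wedge plus a contraction, $\widetilde{X}.\,\Theta=\widetilde{X}\wedge\Theta\pm i_X\Theta$ and $J\widetilde{X}.\,\Theta=J\widetilde{X}\wedge\Theta\pm i_{JX}\Theta$ (the signs fixed by the paper's Clifford convention). Using $({\not}D\psi)\overline{\psi}=e^b.\nabla_{X_b}(\psi\overline{\psi})-e^b.(\psi\overline{\nabla_{X_b}\psi})$ together with $d=e^b\wedge\nabla_{X_b}$ and $\delta=-i_{X^b}\nabla_{X_b}$ identifies $e^b.\nabla_{X_b}(\psi\overline{\psi})$ with the form-Dirac operator applied to $\psi\overline{\psi}$, which splits into $d(\psi\overline{\psi})$ and $\delta(\psi\overline{\psi})$; in the same way $Je^b.\nabla_{X_b}(\psi\overline{\psi})$ splits into $d^c(\psi\overline{\psi})$ and $\delta^c(\psi\overline{\psi})$ via (12)--(13), and the leftover $e^b.(\psi\overline{\nabla_{X_b}\psi})$-type terms, together with the right-acting terms from the dual, assemble into $\mathcal{A},\dots,\mathcal{D}$. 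Finally, the commutation relations $[i_X,L]=J\widetilde{X}\wedge$, $[i_{JX},L]=-\widetilde{X}\wedge$, $[i_X,\Lambda]=[i_{JX},\Lambda]=0$, $[J,L]=[J,\Lambda]=0$ and $[L,\delta]=d^c$, $[\Lambda,d]=-\delta^c$, together with $\Omega=\frac{1}{2}\,e^a.Je_a$, are used to re-express the pieces that mix the two families $\{e^a\}$ and $\{Je^a\}$ through the operators $L$, $\Lambda$ and $J$ applied to $\mathcal{A},\dots,\mathcal{D}$.

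Now I project the resulting identity onto bidegree $(p,q)$. Since $d,\delta,i_{X_a},e_a\wedge$ shift only the holomorphic degree, $d^c,\delta^c,i_{JX_a},Je_a\wedge$ only the anti-holomorphic degree, $L$ and $\Lambda$ shift both degrees while $J$ trades one for the other, bidegree matching forces each residual $\mathcal{A},\dots,\mathcal{D}$-term to enter composed with exactly the one of $L$, $\Lambda$, $J$ that returns it to bidegree $(p,q)$ --- this is how $\alpha_{(p,q-1)}$, $\beta_{(p,q+1)}$, $\gamma_{(p-1,q)}$, $\mu_{(p+1,q)}$ arise --- while the parts of $\widetilde{X_a}$ Clifford-multiplied with $d(\psi\overline{\psi})+\delta(\psi\overline{\psi})$ and with the conjugate and right-acting counterparts organize into $i_{X_a}d(\psi\overline{\psi})_{(p,q)}$, $e_a\wedge\delta(\psi\overline{\psi})_{(p,q)}$, $i_{JX_a}d^c(\psi\overline{\psi})_{(p,q)}$ and $Je_a\wedge\delta^c(\psi\overline{\psi})_{(p,q)}$. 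The numerical factors accompanying these --- the holomorphic and anti-holomorphic degree counts producing $\frac{1}{p+1}$, $\frac{1}{m-p+1}$, $\frac{1}{q+1}$, $\frac{1}{m-q+1}$ --- come out exactly as in the degree-by-degree Riemannian CKY computation behind Proposition 1 \cite{Acik Ertem}, which here must be carried out once in the ${\not}D$ sector (the $\{e^a\}$ frame) and once in the ${\not}D^c$ sector (the $\{Je^a\}$ frame), and the two results added.

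The main obstacle is the bookkeeping of the \emph{cross terms}: those in which $\widetilde{X_a}$ is Clifford-multiplied with a form built from ${\not}D^c$, and $J\widetilde{X_a}$ with a form built from ${\not}D$. These do not lie in a single bidegree and are the sole origin of the $L$, $\Lambda$ and $J$ corrections appearing in the theorem; pinning down their coefficients and signs forces repeated use of the commutators above, together with the fact that a type-$r$ spinor is an eigenspinor of $\Omega$ with eigenvalue $i(2r-m)$ --- equivalently, that (25) is the sum of the two sharper equations (23)--(24). The delicate point is to verify that, after all of this, nothing survives outside the four displayed lines and that the specific values of $k$ and $l$ are exactly what makes the unwanted contributions cancel; everything else is the Clifford-algebra manipulation of \cite{Acik Ertem} performed twice and then reassembled, which yields the stated equation.
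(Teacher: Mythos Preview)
Your sketch is correct and follows essentially the same route as the paper: differentiate the squaring map, substitute (25) on both factors via the $\mathcal{J}$-involution, rewrite $(\nabla_{X_b}\psi)\overline{\psi}=\nabla_{X_b}(\psi\overline{\psi})-\psi\overline{\nabla_{X_b}\psi}$, expand all Clifford products into wedge plus contraction, and then compare the resulting expression for $\nabla_{X_a}(\psi\overline{\psi})_{(p,q)}$ with $i_{X_a}d$, $e_a\wedge\delta$, $i_{JX_a}d^c$, $Je_a\wedge\delta^c$ of the same bilinear, computed from the very same expression; the paper does this last comparison by writing out (48)--(55) explicitly and matching them against (47), while you phrase it as a bidegree projection, but the content is identical.

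One minor correction: you write that ``the specific values of $k$ and $l$ are exactly what makes the unwanted contributions cancel'' and that the eigenvalue $i(2r-m)$ is needed to close the computation. In fact the paper's proof is a purely algebraic reorganization that goes through for \emph{arbitrary} constants $k$ and $l$ in an equation of the shape (25); nothing cancels by virtue of the particular values in (45), and the eigenvalue relation is not invoked beyond its role in deriving (25). This is precisely why Corollary~1 follows immediately for Kirchberg's and Hijazi's variants with different $k,l$. So the ``delicate point'' you anticipate does not arise: once (47) is in hand, the match with (52)--(55) is exact term by term, with the residual pieces being absorbed into $\alpha,\beta,\gamma,\mu$ by definition rather than by cancellation.
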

\begin{proof}
First we calculate the covariant derivative of the $(p,q)$-form bilinears $(\psi\overline{\psi})_{(p,q)}$ of a K\"{a}hlerian twistor spinor $\psi$ satisfying (25). Since the covariant derivative $\nabla$ is compatible with the spin invariant inner product $(\, , \,)$, we can write
\begin{eqnarray}
\nabla_{X_a}(\psi\overline{\psi})_{(p,q)}=\big((\nabla_{X_a}\psi)\overline{\psi}\big)_{(p,q)}+\big(\psi\overline{\nabla_{X_a}\psi}\big)_{(p,q)}\nonumber
\end{eqnarray}
and from (25), we have
\begin{eqnarray}
\nabla_{X_a}(\psi\overline{\psi})_{(p,q)}&=&k\bigg[\big((e_a.\displaystyle{\not}D\psi)\overline{\psi}\big)_{(p,q)}+\big((Je_a.\displaystyle{\not}D^c\psi)\overline{\psi}\big)_{(p,q)}+\big(\psi\overline{e_a.\displaystyle{\not}D\psi}\big)_{(p,q)}+\big(\psi\overline{Je_a.\displaystyle{\not}D^c\psi}\big)_{(p,q)}\bigg]\nonumber\\
&&+l\bigg[\big((Je_a.\displaystyle{\not}D\psi)\overline{\psi}\big)_{(p,q)}-\big((e_a.\displaystyle{\not}D^c\psi)\overline{\psi}\big)_{(p,q)}+\big(\psi\overline{Je_a.\displaystyle{\not}D\psi}\big)_{(p,q)}-\big(\psi\overline{e_a.\displaystyle{\not}D^c\psi}\big)_{(p,q)}\bigg]\nonumber
\end{eqnarray}
where we have used the definitions of $k$ and $l$ given in (45). From the equalities (29) and (32), we can write the identites
\begin{eqnarray}
\overline{e_a.e^b.\nabla_{X_b}\psi}&=&\overline{\nabla_{X_b}\psi}.{e^b}^{\mathcal{J}}.{e_a}^{\mathcal{J}}\nonumber\\
&=&\overline{\nabla_{X_b}\psi}.e^b.e_a\nonumber
\end{eqnarray}
which is true for all choices of $\mathcal{J}$. Similar identites can also be obtained for the terms including $Je_a$ basis. By using these identites and the definitions of $\displaystyle{\not}D$ and $\displaystyle{\not}D^c$ given in (19) and (20), we obtain
\begin{eqnarray}
\nabla_{X_a}(\psi\overline{\psi})_{(p,q)}&=&k\bigg[\big(e_a.e^b.(\nabla_{X_b}\psi)\overline{\psi}\big)_{(p,q)}+\big(Je_a.Je^b.(\nabla_{X_b}\psi)\overline{\psi}\big)_{(p,q)}+\big(\psi\overline{\nabla_{X_b}\psi}.e^b.e_a\big)_{(p,q)}+\big(\psi\overline{\nabla_{X_b}\psi}.Je^b.Je_a\big)_{(p,q)}\bigg]\nonumber\\
&&+l\bigg[\big(Je_a.e^b.(\nabla_{X_b}\psi)\overline{\psi}\big)_{(p,q)}-\big(e_a.Je^b.(\nabla_{X_b}\psi)\overline{\psi}\big)_{(p,q)}+\big(\psi\overline{\nabla_{X_b}\psi}.e^b.Je_a\big)_{(p,q)}-\big(\psi\overline{\nabla_{X_b}\psi}.Je^b.e_a\big)_{(p,q)}\bigg]\nonumber
\end{eqnarray}
and from the identity $(\nabla_{X_b}\psi)\overline{\psi}=\nabla_{X_b}(\psi\overline{\psi})-\psi\overline{\nabla_{X_b}\psi}$, we can write
\begin{eqnarray}
\nabla_{X_a}(\psi\overline{\psi})_{(p,q)}&=&k\bigg[\big(e_a.e^b.\nabla_{X_b}(\psi\overline{\psi})\big)_{(p,q)}-\big(e_a.e^b.\psi\overline{\nabla_{X_b}\psi}\big)_{(p,q)}+\big(Je_a.Je^b.\nabla_{X_b}(\psi\overline{\psi})\big)_{(p,q)}\nonumber\\
&&-\big(Je_a.Je^b.\psi\overline{\nabla_{X_b}\psi}\big)_{(p,q)}+\big(\psi\overline{\nabla_{X_b}\psi}.e^b.e_a\big)_{(p,q)}+\big(\psi\overline{\nabla_{X_b}\psi}.Je^b.Je_a\big)_{(p,q)}\bigg]\nonumber\\
&&+l\bigg[\big(Je_a.e^b.\nabla_{X_b}(\psi\overline{\psi})\big)_{(p,q)}-\big(Je_a.e^b.\psi\overline{\nabla_{X_b}\psi}\big)_{(p,q)}-\big(e_a.Je^b.\nabla_{X_b}(\psi\overline{\psi})\big)_{(p,q)}\nonumber\\
&&+\big(e_a.Je^b.\psi\overline{\nabla_{X_b}\psi}\big)_{(p,q)}+\big(\psi\overline{\nabla_{X_b}\psi}.e^b.Je_a\big)_{(p,q)}-\big(\psi\overline{\nabla_{X_b}\psi}.Je^b.e_a\big)_{(p,q)}\bigg].\nonumber
\end{eqnarray}
For any Clifford form $\omega$, one can write the Clifford product of a 1-form $e^a$ and $\omega$ in terms of wedge product and interior product as follows
\begin{eqnarray}
e^a.\omega&=&e^a\wedge\omega+i_{X^a}\omega\nonumber\\
\omega.e^a&=&e^a\wedge\omega^{\eta}-i_{X^a}\omega^{\eta}.
\end{eqnarray}
From this property, we can write the identites
\begin{eqnarray}
\big(\psi\overline{\nabla_{X_b}\psi}.e^b.e_a\big)_{(p,q)}-\big(e_a.e^b.\psi\overline{\nabla_{X_b}\psi}\big)_{(p,q)}&=&-2e_a\wedge e^b\wedge\big(\psi\overline{\nabla_{X_b}\psi}\big)_{(p-2,q)}-2i_{X_a}i_{X^b}\big(\psi\overline{\nabla_{X_b}\psi}\big)_{(p+2,q)}\nonumber\\
\big(\psi\overline{\nabla_{X_b}\psi}.Je^b.Je_a\big)_{(p,q)}-\big(Je_a.Je^b.\psi\overline{\nabla_{X_b}\psi}\big)_{(p,q)}&=&-2Je_a\wedge Je^b\wedge\big(\psi\overline{\nabla_{X_b}\psi}\big)_{(p,q-2)}-2i_{JX_a}i_{JX^b}\big(\psi\overline{\nabla_{X_b}\psi}\big)_{(p,q+2)}\nonumber\\
\big(\psi\overline{\nabla_{X_b}\psi}.e^b.Je_a\big)_{(p,q)}-\big(Je_a.e^b.\psi\overline{\nabla_{X_b}\psi}\big)_{(p,q)}&=&-2Je_a\wedge e^b\wedge\big(\psi\overline{\nabla_{X_b}\psi}\big)_{(p-1,q-1)}-2i_{JX_a}i_{X^b}\big(\psi\overline{\nabla_{X_b}\psi}\big)_{(p+1,q+1)}\nonumber\\
-\big(\psi\overline{\nabla_{X_b}\psi}.Je^b.e_a\big)_{(p,q)}-\big(e_a.Je^b.\psi\overline{\nabla_{X_b}\psi}\big)_{(p,q)}&=&2e_a\wedge Je^b\wedge\big(\psi\overline{\nabla_{X_b}\psi}\big)_{(p-1,q-1)}+2i_{X_a}i_{JX^b}\big(\psi\overline{\nabla_{X_b}\psi}\big)_{(p+1,q+1)}.\nonumber
\end{eqnarray}
So, from these identities and the definitions of $\displaystyle{\not}d=e^a.\nabla_{X_a}$ and $\displaystyle{\not}d^c=Je^a.\nabla_{X_a}$ acting on Clifford forms, the covariant derivative of $(\psi\overline{\psi})_{(p,q)}$ is written as
\begin{eqnarray}
\nabla_{X_a}(\psi\overline{\psi})_{(p,q)}&=&k\bigg[\big(e_a.\displaystyle{\not}d(\psi\overline{\psi})\big)_{(p,q)}-2e_a\wedge e^b\wedge\big(\psi\overline{\nabla_{X_b}\psi}\big)_{(p-2,q)}-2i_{X_a}i_{X^b}\big(\psi\overline{\nabla_{X_b}\psi}\big)_{(p+2,q)}\nonumber\\
&&+\big(Je_a.\displaystyle{\not}d^c(\psi\overline{\psi})\big)_{(p,q)}-2Je_a\wedge Je^b\wedge\big(\psi\overline{\nabla_{X_b}\psi}\big)_{(p,q-2)}-2i_{JX_a}i_{JX^b}\big(\psi\overline{\nabla_{X_b}\psi}\big)_{(p,q+2)}\bigg]\nonumber\\
&&+l\bigg[\big(Je_a.\displaystyle{\not}d(\psi\overline{\psi})\big)_{(p,q)}-2Je_a\wedge e^b\wedge\big(\psi\overline{\nabla_{X_b}\psi}\big)_{(p-1,q-1)}-2i_{JX_a}i_{X^b}\big(\psi\overline{\nabla_{X_b}\psi}\big)_{(p+1,q+1)}\nonumber\\
&&-\big(e_a.\displaystyle{\not}d^c(\psi\overline{\psi})\big)_{(p,q)}+2e_a\wedge Je^b\wedge\big(\psi\overline{\nabla_{X_b}\psi}\big)_{(p-1,q-1)}+2i_{X_a}i_{JX^b}\big(\psi\overline{\nabla_{X_b}\psi}\big)_{(p+1,q+1)}\bigg]\nonumber
\end{eqnarray}
and by using (46) again, we finally obtain
\begin{eqnarray}
\nabla_{X_a}(\psi\overline{\psi})_{(p,q)}&=&k\bigg[e_a\wedge\big(\displaystyle{\not}d(\psi\overline{\psi})-2e^b\wedge(\psi\overline{\nabla_{X_b}\psi})\big)_{(p-1,q)}+i_{X_a}\big(\displaystyle{\not}d(\psi\overline{\psi})-2i_{X^b}(\psi\overline{\nabla_{X_b}\psi})\big)_{(p+1,q)}\nonumber\\
&&+Je_a\wedge\big(\displaystyle{\not}d^c(\psi\overline{\psi})-2Je^b\wedge(\psi\overline{\nabla_{X_b}\psi})\big)_{(p,q-1)}+i_{JX_a}\big(\displaystyle{\not}d^c(\psi\overline{\psi})-2i_{JX^b}(\psi\overline{\nabla_{X_b}\psi})\big)_{(p,q+1)}\bigg]\nonumber\\
&&+l\bigg[Je_a\wedge\big(\displaystyle{\not}d(\psi\overline{\psi})-2e^b\wedge(\psi\overline{\nabla_{X_b}\psi})\big)_{(p,q-1)}+i_{JX_a}\big(\displaystyle{\not}d(\psi\overline{\psi})-2i_{X^b}(\psi\overline{\nabla_{X_b}\psi})\big)_{(p,q+1)}\nonumber\\
&&-e_a\wedge\big(\displaystyle{\not}d^c(\psi\overline{\psi})-2Je^b\wedge(\psi\overline{\nabla_{X_b}\psi})\big)_{(p-1,q)}-i_{X_a}\big(\displaystyle{\not}d^c(\psi\overline{\psi})-2i_{JX^b}(\psi\overline{\nabla_{X_b}\psi})\big)_{(p+1,q)}\bigg].
\end{eqnarray}
Now, by using the equalities (3), (4), (12) and (13), we can calculate the action of the operators $d$, $\delta$, $d^c$ and $\delta^c$ on the bilinears $(\psi\overline{\psi})_{(p,q)}$. Because of the antisymmetry of the wedge product, we have $e^a\wedge e_a=0$ and from the property $e^a\wedge i_{X_a}\alpha_{(p,q)}=p\alpha_{(p,q)}$ for any $(p,q)$-form $\alpha$, we obtain
\begin{eqnarray}
d(\psi\overline{\psi})_{(p,q)}&=&e^a\wedge\nabla_{X_a}(\psi\overline{\psi})_{(p,q)}\nonumber\\
&=&k\bigg[(p+1)\big(\displaystyle{\not}d(\psi\overline{\psi})-2i_{X^b}(\psi\overline{\nabla_{X_b}\psi})\big)_{(p+1,q)}+e^ a\wedge Je_a\wedge\big(\displaystyle{\not}d^c(\psi\overline{\psi})-2Je^b\wedge(\psi\overline{\nabla_{X_b}\psi})\big)_{(p,q-1)}\nonumber\\
&&+e^a\wedge i_{JX_a}\big(\displaystyle{\not}d^c(\psi\overline{\psi})-2i_{JX^b}(\psi\overline{\nabla_{X_b}\psi})\big)_{(p,q+1)}\bigg]\nonumber\\
&&+l\bigg[e^a\wedge Je_a\wedge\big(\displaystyle{\not}d(\psi\overline{\psi})-2e^b\wedge(\psi\overline{\nabla_{X_b}\psi})\big)_{(p,q-1)}+e^a\wedge i_{JX_a}\big(\displaystyle{\not}d(\psi\overline{\psi})-2i_{X^b}(\psi\overline{\nabla_{X_b}\psi})\big)_{(p,q+1)}\nonumber\\
&&-(p+1)\big(\displaystyle{\not}d^c(\psi\overline{\psi})-2i_{JX^b}(\psi\overline{\nabla_{X_b}\psi})\big)_{(p+1,q)}\nonumber\\
&=&(p+1)\bigg(k\big(\displaystyle{\not}d(\psi\overline{\psi})-2i_{X^b}(\psi\overline{\nabla_{X_b}\psi})\big)-l\big(\displaystyle{\not}d^c(\psi\overline{\psi}-2i_{JX^b}(\psi\overline{\nabla_{X_b}\psi}))\big)\bigg)_{(p+1,q)}\nonumber\\
&&+e^a\wedge Je_a\wedge\bigg(k\big(\displaystyle{\not}d^c(\psi\overline{\psi})-2Je^b\wedge(\psi\overline{\nabla_{X_b}\psi})\big)+l\big(\displaystyle{\not}d(\psi\overline{\psi})-2e^b\wedge(\psi\overline{\nabla_{X_b}\psi})\big)\bigg)_{(p,q-1)}\nonumber\\
&&+e^a\wedge i_{JX_a}\bigg(k\big(\displaystyle{\not}d^c(\psi\overline{\psi})-2i_{JX^b}(\psi\overline{\nabla_{X_b}\psi})\big)+l\big(\displaystyle{\not}d(\psi\overline{\psi})-2i_{X^b}(\psi\overline{\nabla_{X_b}\psi})\big)\bigg)_{(p,q+1)}.
\end{eqnarray}
In a similar way, by considering the equalities $Je^a\wedge Je_a=0$ and $Je^a\wedge i_{JX_a}\alpha_{(p,q)}=q\alpha_{(p,q)}$, we can find
\begin{eqnarray}
d^c(\psi\overline{\psi})_{(p,q)}&=&Je^a\wedge\nabla_{X_a}(\psi\overline{\psi})_{(p,q)}\nonumber\\
&=&(q+1)\bigg(k\big(\displaystyle{\not}d^c(\psi\overline{\psi})-2i_{JX^b}(\psi\overline{\nabla_{X_b}\psi})\big)+l\big(\displaystyle{\not}d(\psi\overline{\psi}-2i_{X^b}(\psi\overline{\nabla_{X_b}\psi}))\big)\bigg)_{(p,q+1)}\nonumber\\
&&+Je^a\wedge e_a\wedge\bigg(k\big(\displaystyle{\not}d(\psi\overline{\psi})-2e^b\wedge(\psi\overline{\nabla_{X_b}\psi})\big)-l\big(\displaystyle{\not}d^c(\psi\overline{\psi})-2Je^b\wedge(\psi\overline{\nabla_{X_b}\psi})\big)\bigg)_{(p-1,q)}\nonumber\\
&&+Je^a\wedge i_{X_a}\bigg(k\big(\displaystyle{\not}d(\psi\overline{\psi})-2i_{X^b}(\psi\overline{\nabla_{X_b}\psi})\big)-l\big(\displaystyle{\not}d^c(\psi\overline{\psi})-2i_{JX^b}(\psi\overline{\nabla_{X_b}\psi})\big)\bigg)_{(p+1,q)}.
\end{eqnarray}
We also have the identities $i_{X^a}i_{X_a}=0$, $i_{X^a}(Je_a)=0$ and $i_{X^a}(e_a\wedge\alpha_{(p,q)})=(m-p)\alpha_{(p,q)}$. By using them, we find
\begin{eqnarray}
\delta(\psi\overline{\psi})_{(p,q)}&=&-i_{X^a}\nabla_{X_a}(\psi\overline{\psi})_{(p,q)}\nonumber\\
&=&(m-p+1)\bigg(-k\big(\displaystyle{\not}d(\psi\overline{\psi})-2e^b\wedge(\psi\overline{\nabla_{X_b}\psi})\big)+l\big(\displaystyle{\not}d^c(\psi\overline{\psi}-2Je^b\wedge(\psi\overline{\nabla_{X_b}\psi}))\big)\bigg)_{(p-1,q)}\nonumber\\
&&+Je^a\wedge i_{X_a}\bigg(k\big(\displaystyle{\not}d^c(\psi\overline{\psi})-2Je^b\wedge(\psi\overline{\nabla_{X_b}\psi})\big)+l\big(\displaystyle{\not}d(\psi\overline{\psi})-2e^b\wedge(\psi\overline{\nabla_{X_b}\psi})\big)\bigg)_{(p,q-1)}\nonumber\\
&&+i_{X^a} i_{JX_a}\bigg(k\big(\displaystyle{\not}d^c(\psi\overline{\psi})-2i_{JX^b}(\psi\overline{\nabla_{X_b}\psi})\big)+l\big(\displaystyle{\not}d(\psi\overline{\psi})-2i_{X^b}(\psi\overline{\nabla_{X_b}\psi})\big)\bigg)_{(p,q+1)}.
\end{eqnarray}
Similarly, from the identities $i_{JX^a}i_{JX_a}=0$, $i_{JX^a}(e_a)=0$ and $i_{JX^a}(Je_a\wedge\alpha_{(p,q)})=(m-q)\alpha_{(p,q)}$, we obtain
\begin{eqnarray}
\delta^c(\psi\overline{\psi})_{(p,q)}&=&-i_{JX^a}\nabla_{X_a}(\psi\overline{\psi})_{(p,q)}\nonumber\\
&=&-(m-q+1)\bigg(k\big(\displaystyle{\not}d^c(\psi\overline{\psi})-2Je^b\wedge(\psi\overline{\nabla_{X_b}\psi})\big)+l\big(\displaystyle{\not}d(\psi\overline{\psi}-2e^b\wedge(\psi\overline{\nabla_{X_b}\psi}))\big)\bigg)_{(p,q-1)}\nonumber\\
&&+e^a\wedge i_{JX_a}\bigg(k\big(\displaystyle{\not}d(\psi\overline{\psi})-2e^b\wedge(\psi\overline{\nabla_{X_b}\psi})\big)-l\big(\displaystyle{\not}d^c(\psi\overline{\psi})-2Je^b\wedge(\psi\overline{\nabla_{X_b}\psi})\big)\bigg)_{(p-1,q)}\nonumber\\
&&+i_{JX^a} i_{X_a}\bigg(-k\big(\displaystyle{\not}d(\psi\overline{\psi})-2i_{X^b}(\psi\overline{\nabla_{X_b}\psi})\big)+l\big(\displaystyle{\not}d^c(\psi\overline{\psi})-2i_{JX^b}(\psi\overline{\nabla_{X_b}\psi})\big)\bigg)_{(p+1,q)}.
\end{eqnarray}
The next step in the proof consists of the contractions of $d(\psi\overline{\psi})_{(p,q)}$ and $d^c(\psi\overline{\psi})_{(p,q)}$ given in (48) and (49) with the vector fields $X_a$ and $JX_a$ respectively and the wedge products of $\delta(\psi\overline{\psi})_{(p,q)}$ and $\delta^c(\psi\overline{\psi})_{(p,q)}$ given in (50) and (51) with the 1-forms $e_a$ and $Je_a$ respectively. If we contract $d(\psi\overline{\psi})_{(p,q)}$ with $X_a$, we find from (48)
\begin{eqnarray}
i_{X_a}d(\psi\overline{\psi})_{(p,q)}&=&(p+1)i_{X_a}\bigg(k\big(\displaystyle{\not}d(\psi\overline{\psi})-2i_{X^b}(\psi\overline{\nabla_{X_b}\psi})\big)-l\big(\displaystyle{\not}d^c(\psi\overline{\psi})-2i_{JX^b}(\psi\overline{\nabla_{X_b}\psi})\big)\bigg)_{(p+1,q)}\nonumber\\
&&+2i_{X_a}\bigg[L\bigg(k\big(\displaystyle{\not}d^c(\psi\overline{\psi})-2Je^b\wedge(\psi\overline{\nabla_{X_b}\psi})\big)+l\big(\displaystyle{\not}d(\psi\overline{\psi})-2e^b\wedge(\psi\overline{\nabla_{X_b}\psi})\big)\bigg)_{(p,q-1)}\bigg]\nonumber\\
&&+2i_{JX_a}\bigg(k\big(\displaystyle{\not}d^c(\psi\overline{\psi})-2i_{JX^b}(\psi\overline{\nabla_{X_b}\psi})\big)+l\big(\displaystyle{\not}d(\psi\overline{\psi})-2i_{X^b}(\psi\overline{\nabla_{X_b}\psi})\big)\bigg)_{(p,q+1)}
\end{eqnarray}
where we have used the definitions of $L$ and $J$ given in (5) and (7), the identity $i_{JX_a}=Ji_{X_a}$ and the commutator relations in (11). Similarly, by contracting $d^c(\psi\overline{\psi})_{(p,q)}$ with $JX_a$, we obtain from (49)
\begin{eqnarray}
i_{JX_a}d^c(\psi\overline{\psi})_{(p,q)}&=&(q+1)i_{JX_a}\bigg(k\big(\displaystyle{\not}d^c(\psi\overline{\psi})-2i_{JX^b}(\psi\overline{\nabla_{X_b}\psi})\big)+l\big(\displaystyle{\not}d(\psi\overline{\psi})-2i_{X^b}(\psi\overline{\nabla_{X_b}\psi})\big)\bigg)_{(p,q+1)}\nonumber\\
&&-2i_{JX_a}\bigg[L\bigg(k\big(\displaystyle{\not}d(\psi\overline{\psi})-2e^b\wedge(\psi\overline{\nabla_{X_b}\psi})\big)-l\big(\displaystyle{\not}d^c(\psi\overline{\psi})-2Je^b\wedge(\psi\overline{\nabla_{X_b}\psi})\big)\bigg)_{(p-1,q)}\bigg]\nonumber\\
&&-2i_{X_a}\bigg(k\big(\displaystyle{\not}d(\psi\overline{\psi})-2i_{X^b}(\psi\overline{\nabla_{X_b}\psi})\big)-l\big(\displaystyle{\not}d^c(\psi\overline{\psi})-2i_{JX^b}(\psi\overline{\nabla_{X_b}\psi})\big)\bigg)_{(p+1,q)}.
\end{eqnarray}
By taking the wedge product of $e_a$ with $\delta(\psi\overline{\psi})_{(p,q)}$, we find from (50)
\begin{eqnarray}
e_a\wedge\delta(\psi\overline{\psi})_{(p,q)}&=&(m-p+1)\bigg(-k\big(\displaystyle{\not}d(\psi\overline{\psi})-2e^b\wedge(\psi\overline{\nabla_{X_b}\psi})\big)+l\big(\displaystyle{\not}d^c(\psi\overline{\psi})-2Je^b\wedge(\psi\overline{\nabla_{X_b}\psi})\big)\bigg)_{(p-1,q)}\nonumber\\
&&+e_a\wedge J\bigg(k\big(\displaystyle{\not}d^c(\psi\overline{\psi})-2Je^b\wedge(\psi\overline{\nabla_{X_b}\psi})\big)+l\big(\displaystyle{\not}d(\psi\overline{\psi})-2e^b\wedge(\psi\overline{\nabla_{X_b}\psi})\big)\bigg)_{(p,q-1)}\nonumber\\
&&+2e_a\wedge\Lambda\bigg(k\big(\displaystyle{\not}d^c(\psi\overline{\psi})-2i_{JX^b}(\psi\overline{\nabla_{X_b}\psi})\big)+l\big(\displaystyle{\not}d(\psi\overline{\psi})-2i_{X^b}(\psi\overline{\nabla_{X_b}\psi})\big)\bigg)_{(p,q+1)}
\end{eqnarray}
where we have used the definitons of $J$ and $\Lambda$ given in (7) and (6). Similarly, we can also find the wedge product of $Je_a$ with $\delta^c(\psi\overline{\psi})_{(p,q)}$ from (51) as
\begin{eqnarray}
Je_a\wedge\delta^c(\psi\overline{\psi})_{(p,q)}&=&-(m-q+1)Je_a\wedge\bigg(k\big(\displaystyle{\not}d^c(\psi\overline{\psi})-2Je^b\wedge(\psi\overline{\nabla_{X_b}\psi})\big)+l\big(\displaystyle{\not}d(\psi\overline{\psi})-2e^b\wedge(\psi\overline{\nabla_{X_b}\psi})\big)\bigg)_{(p,q-1)}\nonumber\\
&&+Je_a\wedge J\bigg(k\big(\displaystyle{\not}d(\psi\overline{\psi})-2e^b\wedge(\psi\overline{\nabla_{X_b}\psi})\big)-l\big(\displaystyle{\not}d^c(\psi\overline{\psi})-2Je^b\wedge(\psi\overline{\nabla_{X_b}\psi})\big)\bigg)_{(p-1,q)}\nonumber\\
&&+2Je_a\wedge\Lambda\bigg(-k\big(\displaystyle{\not}d(\psi\overline{\psi})-2i_{X^b}(\psi\overline{\nabla_{X_b}\psi})\big)+l\big(\displaystyle{\not}d^c(\psi\overline{\psi})-2i_{JX^b}(\psi\overline{\nabla_{X_b}\psi})\big)\bigg)_{(p+1,q)}.
\end{eqnarray}
Hence, by considering the definitions $\alpha_{(p,q-1)}$, $\beta_{(p,q+1)}$, $\gamma_{(p-1,q)}$ and $\mu_{(p+1,q)}$ given in (37)-(40) and (41)-(44) and comparing the equality obtained in (47) with the equalities (52)-(55), we finally find the following equation
\begin{eqnarray}
\nabla_{X_a}(\psi\overline{\psi})_{(p,q)}&=&\frac{1}{p+1}i_{X_a}\bigg(d(\psi\overline{\psi})_{(p,q)}-2L\alpha_{(p,q-1)}-2J\beta_{(p,q+1)}\bigg)\nonumber\\
&&-\frac{1}{m-p+1}e_a\wedge\bigg(\delta(\psi\overline{\psi})_{(p,q)}-J\alpha_{(p,q-1)}-2\Lambda\beta_{(p,q+1)}\bigg)\nonumber\\
&&+\frac{1}{q+1}i_{JX_a}\bigg(d^c(\psi\overline{\psi})_{(p,q)}+2L\gamma_{(p-1,q)}-2J\mu_{(p+1,q)}\bigg)\nonumber\\
&&-\frac{1}{m-q+1}Je_a\wedge\bigg(\delta^c(\psi\overline{\psi})_{(p,q)}-J\gamma_{(p-1,q)}-2\Lambda\mu_{(p+1,q)}\bigg)
\end{eqnarray}
which proves the theorem.
\end{proof}

\section{Special Cases}

Bilinear form equation (56) for K\"{a}hlerian twistor spinors reduces to a more simple form for special cases of Kirchberg's and Hijazi's K\"{a}hlerian twistor spinors given in (27) and (26), respectively.

\begin{corollary}
Bilinears forms of Kirchberg's K\"{a}hlerian twistor spinors given in (27) satisfy the equation (56) with $k=\frac{1}{4r}$ and $l=0$ in the definitions (37)-(40). Similarly, bilinear forms of Hijazi's K\"{a}hlerian twistor spinors given in (26) satisfy the equation (56) with the following new definitions
\begin{eqnarray}
\alpha_{p,q-1}&=&b\bigg(\displaystyle{\not}d^c(\psi\overline{\psi})-2Je^b\wedge(\psi\overline{\nabla_{X_b}\psi})\bigg)_{(p,q-1)}\\
\beta_{(p,q+1)}&=&b\bigg(\displaystyle{\not}d^c(\psi\overline{\psi})-2i_{JX^b}(\psi\overline{\nabla_{X_b}\psi})\bigg)_{(p,q+1)}\\
\gamma_{(p-1,q)}&=&a\bigg(\displaystyle{\not}d(\psi\overline{\psi})-2e^b\wedge(\psi\overline{\nabla_{X_b}\psi})\bigg)_{(p-1,q)}\\
\mu_{(p+1,q)}&=&a\bigg(\displaystyle{\not}d(\psi\overline{\psi})-2i_{X^b}(\psi\overline{\nabla_{X_b}\psi})\bigg)_{(p,q-1)}
\end{eqnarray}
where $a$ and $b$ are constants given in (26).
\end{corollary}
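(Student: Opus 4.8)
The plan is to obtain both halves of the corollary by rerunning the computation in the proof of Theorem~1, using the observation that that proof never invokes the particular values (45) of $k$ and $l$: all it uses is that $\nabla_X\psi$ is a fixed, constant-coefficient linear combination of the four Clifford expressions $\widetilde{X}.\displaystyle{\not}D\psi$, $J\widetilde{X}.\displaystyle{\not}D^c\psi$, $J\widetilde{X}.\displaystyle{\not}D\psi$, $\widetilde{X}.\displaystyle{\not}D^c\psi$. I would first isolate the ``building blocks'' of that computation. Tracing the proof, one sees that the term $\widetilde{X}.\displaystyle{\not}D\psi$ in $\nabla_X\psi$ feeds, after the manipulations using (29), (32), (46) and the definitions of $d,\delta,d^c,\delta^c,L,\Lambda,J$, precisely the $\mathcal{B}$- and $\mathcal{D}$-type contributions in (56) (those built from $\displaystyle{\not}d$, $e^b\wedge$ and $i_{X^b}$, entering through $e_a\wedge$ and $i_{X_a}$), while the term $J\widetilde{X}.\displaystyle{\not}D^c\psi$ feeds the $\mathcal{A}$- and $\mathcal{C}$-type contributions (those built from $\displaystyle{\not}d^c$, $Je^b\wedge$ and $i_{JX^b}$, entering through $Je_a\wedge$ and $i_{JX_a}$); the two remaining Clifford expressions are what generate, through the coefficient $l$, the mixed pieces in (37)--(40).

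For Kirchberg's spinors (27), the defining equation is (25) with the $l$-term suppressed and $k$ replaced by $\frac{1}{4r}$. Setting $l=0$ throughout the proof of Theorem~1 kills every $l$-proportional contribution in (47) and in (52)--(55), and the definitions (37)--(40) collapse to $\alpha_{(p,q-1)}=k\mathcal{A}_{(p,q-1)}$, $\beta_{(p,q+1)}=k\mathcal{C}_{(p,q+1)}$, $\gamma_{(p-1,q)}=k\mathcal{B}_{(p-1,q)}$, $\mu_{(p+1,q)}=k\mathcal{D}_{(p+1,q)}$ with $k=\frac{1}{4r}$. This is the first claim.

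For Hijazi's spinors (26), $\nabla_{X_a}\psi=a\,e_a.\displaystyle{\not}D\psi+b\,Je_a.\displaystyle{\not}D^c\psi$, which is still the $l=0$ situation but with the two surviving Clifford structures now carrying \emph{independent} weights $a$ and $b$ rather than a common weight. I would therefore redo the proof of Theorem~1 from the identity $\nabla_{X_a}(\psi\overline{\psi})_{(p,q)}=\big((\nabla_{X_a}\psi)\overline{\psi}\big)_{(p,q)}+\big(\psi\overline{\nabla_{X_a}\psi}\big)_{(p,q)}$, splitting the right-hand side into the $a$-weighted part arising from $e_a.\displaystyle{\not}D\psi$ and the $b$-weighted part arising from $Je_a.\displaystyle{\not}D^c\psi$, and pushing both parts through the identical chain of manipulations. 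By the block structure above, the $a$-part produces the $\mathcal{B}$- and $\mathcal{D}$-pieces and the $b$-part produces the $\mathcal{A}$- and $\mathcal{C}$-pieces; reassembling them into the schema of (56) yields $\alpha_{(p,q-1)}=b\,\mathcal{A}_{(p,q-1)}$, $\beta_{(p,q+1)}=b\,\mathcal{C}_{(p,q+1)}$, $\gamma_{(p-1,q)}=a\,\mathcal{B}_{(p-1,q)}$ and $\mu_{(p+1,q)}=a\,\mathcal{D}_{(p+1,q)}$, which are the new definitions (57)--(60).

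The computation is routine once the Theorem~1 proof is in hand; the one place needing care is the bookkeeping of which of $\mathcal{A},\mathcal{B},\mathcal{C},\mathcal{D}$ each coefficient attaches to, since in (25) the weight $k$ stands in front of an $\mathcal{A}$-type \emph{and} a $\mathcal{B}$-type structure simultaneously, whereas in (26) the weights $a$ and $b$ must be kept apart from the very first line. A convenient consistency check is that putting $a=b$ in the Hijazi formulas (57)--(60) should reproduce the $l=0$ specialisation of (37)--(40) used for Kirchberg's spinors; it does, in agreement with (27) being itself the $a=b=\frac{1}{4r}$ case of (26).
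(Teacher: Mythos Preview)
Your approach is correct and matches the paper's treatment: the paper states this corollary without proof, as an immediate consequence of the proof of Theorem~1, and your argument makes explicit precisely why that is so---namely, that the derivation of (47) and its comparison with (52)--(55) is linear in the coefficients of the four Clifford expressions $\widetilde{X}.\displaystyle{\not}D\psi$, $J\widetilde{X}.\displaystyle{\not}D^c\psi$, $J\widetilde{X}.\displaystyle{\not}D\psi$, $\widetilde{X}.\displaystyle{\not}D^c\psi$ and never uses the specific values (45). Your observation that (47) already packages the covariant derivative as $e_a\wedge\gamma+i_{X_a}\mu+Je_a\wedge\alpha+i_{JX_a}\beta$, so that (48)--(55) and the assembly into (56) go through for \emph{any} such $\alpha,\beta,\gamma,\mu$, is exactly the right structural point, and your consistency check $a=b\Rightarrow$ Kirchberg is a nice verification.
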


We can also determine the bilinear form equations of holomorphic and anti-holomorphic K\"{a}hlerian twistor spinors of type $r$.
\begin{theorem}
If $\psi$ is a holomorphic K\"{a}hlerian twistor spinor of type $r$ on a K\"{a}hlerian spin manifold $M^{2m}$, then the bilinear forms $(\psi\overline{\psi})_p$ satisfy the equation (56) with the new definitions of
\begin{equation}
k=-l=\frac{1}{16(m-r+1)}.
\end{equation}
Similarly, if $\psi$ is an anti-holomorphic K\"{a}hlerian twistor spinor of type $r$ on a K\"{a}hlerian spin manifold $M^{2m}$, then the bilinear forms $(\psi\overline{\psi})_p$ satisfy the equation (56) with the new definitions of
\begin{equation}
k=l=\frac{1}{16(r+1)}.
\end{equation}
\end{theorem}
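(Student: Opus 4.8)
The plan is to derive Theorem 2 from Theorem 1, using that under the holomorphicity hypotheses the two Clifford terms on the right-hand side of the defining equation (25) become linearly dependent, so that the coefficient pair $(k,l)$ is fixed only up to a one-parameter rescaling and may be normalized to the symmetric form claimed. First I would rewrite Definition 3 in terms of $\not D$ and $\not D^c$: since $\not D^{\pm}=\frac{1}{2}(\not D\mp i\not D^c)$, a holomorphic K\"ahlerian twistor spinor ($\not D^+\psi=0$) satisfies $\not D^c\psi=-i\not D\psi$, while an anti-holomorphic one ($\not D^-\psi=0$) satisfies $\not D^c\psi=+i\not D\psi$. Substituting $\not D^c\psi=\mp i\not D\psi$ into the two combinations $\widetilde X.\not D\psi+J\widetilde X.\not D^c\psi$ and $i(J\widetilde X.\not D\psi-\widetilde X.\not D^c\psi)$ appearing in (25), a short computation shows that the second equals $-1$ times the first in the holomorphic case and $+1$ times the first in the anti-holomorphic case. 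Hence the right-hand side of (25) is a scalar multiple of a single Clifford form: the holomorphic twistor equation coincides with (25) for any pair $(k',l')$ having a fixed value of $k'-l'$, and the anti-holomorphic one for any pair having a fixed value of $k'+l'$.

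Now $\psi$ is in particular a K\"ahlerian twistor spinor of type $r$, so it satisfies (25) with the constants (45) and Theorem 1 gives (56). But the proof of Theorem 1 uses $k$ and $l$ only linearly: they are carried unchanged through (47), through the expressions (48)--(51) for $d,\delta,d^c,\delta^c$ acting on $(\psi\overline\psi)_{(p,q)}$, and through the contractions and wedge products (52)--(55), whereas the prefactors $\frac{1}{p+1},\frac{1}{m-p+1},\frac{1}{q+1},\frac{1}{m-q+1}$ come from purely combinatorial identities (of the type $e^a\wedge i_{X_a}\alpha_{(p,q)}=p\,\alpha_{(p,q)}$) and do not involve $k,l$; likewise the auxiliary forms $\mathcal A,\mathcal B,\mathcal C,\mathcal D$ of (41)--(44) are independent of $k,l$. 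Therefore the identical chain of identities yields (56) for every such presentation of (25). Choosing, in the holomorphic case, the presentation with $k'=-l'$ and, in the anti-holomorphic case, the one with $k'=l'$, and then inserting the explicit values (45) and simplifying, one obtains (56) with the constants (62) and (63) in the definitions (37)--(40), nothing else in (56) changing.

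The step requiring the most care is this linear-dependence computation, since a sign slip there would feed directly into the final constants: one must verify $i(J\widetilde X.\not D\psi-\widetilde X.\not D^c\psi)=\mp(\widetilde X.\not D\psi+J\widetilde X.\not D^c\psi)$ with the correct sign in each case and check consistency with the component equations (23)--(24) (for a holomorphic spinor $\not D^-\psi$ may be replaced by $\not D\psi$ there and $\nabla_{X^-}\psi=0$, and symmetrically for an anti-holomorphic spinor). Beyond that, the argument is an appeal to the linearity of the computation already performed for Theorem 1, needing no geometric input past Definition 3 and the K\"ahler identities (15)--(16) used there.
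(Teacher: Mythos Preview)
Your approach is essentially the same as the paper's: both show that under the (anti-)holomorphicity hypothesis the twistor equation can be recast in the form (25) with new constants and then invoke Theorem 1, whose derivation is linear in $k,l$. The only cosmetic difference is that the paper expands $\widetilde X^+.\not D^-\psi$ (resp.\ $\widetilde X^-.\not D^+\psi$) directly in terms of $\widetilde X,J\widetilde X,\not D\psi,\not D^c\psi$ to read off the new $k,l$, while you reach the same endpoint by substituting $\not D^c\psi=\mp i\not D\psi$ into (25) and exploiting the resulting proportionality of the two Clifford brackets.
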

\begin{proof}
If $\psi$ is a K\"{a}hlerian twistor spinor of type $r$, then it satisfies
\begin{equation}
\nabla_X\psi=\frac{m+2}{8(r+1)(m-r+1)}\big(\widetilde{X}.\displaystyle{\not}D\psi+J\widetilde{X}.\displaystyle{\not}D^c\psi\big)+\frac{m-2r}{8(r+1)(m-r+1)}i\big(J\widetilde{X}.\displaystyle{\not}D\psi-\widetilde{X}.\displaystyle{\not}D^c\psi\big).
\end{equation}
By considering the definitions
\[
X=X^++X^-\quad\quad, \quad\quad JX=i(X^+-X^-)
\]
and
\[
\displaystyle{\not}D=\displaystyle{\not}D^++\displaystyle{\not}D^-\quad\quad ,\quad\quad \displaystyle{\not}D^c=i(\displaystyle{\not}D^+-\displaystyle{\not}D^-),
\]
one can write the above equation in the following form
\begin{equation}
\nabla_X\psi=\frac{1}{4(m-r+1)}\widetilde{X}^+.\displaystyle{\not}D^+\psi-\frac{1}{4(r+1)}\widetilde{X}^-.\displaystyle{\not}D^+\psi.
\end{equation}
If $\psi$ is holomorphic, then we have $\displaystyle{\not}D^+\psi=0$ and (64) transforms into
\begin{eqnarray}
\nabla_X\psi&=&\frac{1}{4(m-r+1)}\widetilde{X}^+.\displaystyle{\not}D^-\psi\nonumber\\
&=&\frac{1}{16(m-r+1)}\big(\widetilde{X}.\displaystyle{\not}D\psi+i\widetilde{X}.\displaystyle{\not}D^c\psi-iJ\widetilde{X}.\displaystyle{\not}D\psi+J\widetilde{X}.\displaystyle{\not}D^c\psi\big)
\end{eqnarray}
where we have used $X^+=\frac{1}{2}(X-iJX)$ and $\displaystyle{\not}D^-=\frac{1}{2}(\displaystyle{\not}D+i\displaystyle{\not}D^c)$. So, (65) is exactly in the same form with K\"{a}hlerian twistor equation for $k=-l=\frac{1}{16(m-r+1)}$ and the bilinears $(\psi\overline{\psi})_p$ satisfy exactly the same equation for these new values of $k$ and $l$.

Similarly, if $\psi$ is anti-holomorphic, then we have $\displaystyle{\not}D^-\psi=0$ and (64) transforms into
\begin{eqnarray}
\nabla_X\psi&=&\frac{1}{4(r+1)}\widetilde{X}^-.\displaystyle{\not}D^+\psi\nonumber\\
&=&\frac{1}{16(r+1)}\big(\widetilde{X}.\displaystyle{\not}D\psi-i\widetilde{X}.\displaystyle{\not}D^c\psi+iJ\widetilde{X}.\displaystyle{\not}D\psi+J\widetilde{X}.\displaystyle{\not}D^c\psi\big).
\end{eqnarray}
So, (66) is also exactly in the same form with K\"{a}hlerian twistor equation for $k=l=\frac{1}{16(r+1)}$ and the bilinears $(\psi\overline{\psi})_p$ satisfy exactly the same equation for these new values of $k$ and $l$.
\end{proof}

We obtain another special case for the bilinear forms of K\"{a}hlerian twistor spinors when some constraints on $\alpha_{(p,q-1)}$, $\beta_{(p,q+1)}$, $\gamma_{(p-1,q)}$ and $\mu_{(p+1,q)}$ given in (37)-(40) are satisfied.
\begin{proposition}
For a K\"{a}hlerian twistor spinor $\psi$ of type $r$, if the forms $\alpha_{(p,q-1)}$, $\beta_{(p,q+1)}$, $\gamma_{(p-1,q)}$ and $\mu_{(p+1,q)}$ defined in (37)-(40) satisfy the following conditions
\begin{eqnarray}
L\alpha_{(p,q-1)}=-J\beta_{(p,q+1)}\quad\quad &,&\quad\quad J\alpha_{(p,q-1)}=-2\Lambda\beta_{(p,q+1)}\nonumber\\
L\gamma_{(p-1,q)}=J\mu_{(p+1,q)}\quad\quad &,& \quad\quad J\gamma_{(p-1,q)}=-2\Lambda\mu_{(p+1,q)},
\end{eqnarray}
then the bilinear forms $(\psi\overline{\psi})_p$ of $\psi$ satisfy the K\"{a}hlerian CKY equation.
\end{proposition}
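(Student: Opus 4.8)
The plan is simply to substitute the four algebraic constraints (67) into the master identity (56) of Theorem~1 and to observe that each of the four bracketed ``correction'' terms collapses to zero, so that what remains is precisely the K\"{a}hlerian conformal Killing--Yano equation.

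Concretely, I would go line by line through (56). In the first line the combination $-2L\alpha_{(p,q-1)}-2J\beta_{(p,q+1)}$ vanishes by the hypothesis $L\alpha_{(p,q-1)}=-J\beta_{(p,q+1)}$, so that line reduces to $\frac{1}{p+1}i_{X_a}d(\psi\overline{\psi})_{(p,q)}$. In the second line $-J\alpha_{(p,q-1)}-2\Lambda\beta_{(p,q+1)}=0$ by $J\alpha_{(p,q-1)}=-2\Lambda\beta_{(p,q+1)}$, leaving $-\frac{1}{m-p+1}e_a\wedge\delta(\psi\overline{\psi})_{(p,q)}$. In the third line $2L\gamma_{(p-1,q)}-2J\mu_{(p+1,q)}=0$ by $L\gamma_{(p-1,q)}=J\mu_{(p+1,q)}$, leaving $\frac{1}{q+1}i_{JX_a}d^c(\psi\overline{\psi})_{(p,q)}$; and in the fourth line $-J\gamma_{(p-1,q)}-2\Lambda\mu_{(p+1,q)}=0$ by $J\gamma_{(p-1,q)}=-2\Lambda\mu_{(p+1,q)}$, leaving $-\frac{1}{m-q+1}Je_a\wedge\delta^c(\psi\overline{\psi})_{(p,q)}$. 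Collecting the four surviving terms, (56) becomes, for every bidegree $(p,q)$,
\begin{eqnarray}
\nabla_{X_a}(\psi\overline{\psi})_{(p,q)}&=&\frac{1}{p+1}i_{X_a}d(\psi\overline{\psi})_{(p,q)}-\frac{1}{m-p+1}e_a\wedge\delta(\psi\overline{\psi})_{(p,q)}\nonumber\\
&&+\frac{1}{q+1}i_{JX_a}d^c(\psi\overline{\psi})_{(p,q)}-\frac{1}{m-q+1}Je_a\wedge\delta^c(\psi\overline{\psi})_{(p,q)},\nonumber
\end{eqnarray}
which is exactly the K\"{a}hlerian CKY equation obeyed by each $(p,q)$-component of the bilinear $p$-form $(\psi\overline{\psi})_p$.

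Since Theorem~1 has already absorbed all of the spinorial input and all of the analytic work, there is essentially no obstacle here beyond the four sign cancellations above; I would also note in passing that, given the structure of (56), the four hypotheses in (67) are not merely sufficient but exactly what is needed for all four correction terms to vanish simultaneously. The one step I would treat carefully is pure bookkeeping: checking that the displayed reduced equation coincides, coefficient by coefficient, with the normalization of the K\"{a}hlerian CKY equation used in \cite{Moroianu Semmelmann} (and consistent with the Riemannian model of Proposition~1), so that the phrase ``K\"{a}hlerian CKY equation'' in the statement is unambiguous.
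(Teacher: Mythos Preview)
Your proposal is correct and follows exactly the same approach as the paper: substitute the four conditions (67) into the bracketed terms of (36)/(56) so that the correction terms vanish, leaving the K\"{a}hlerian CKY equation. Your line-by-line verification is in fact more explicit than the paper's own proof, which simply states that ``one can easily see from (36)'' that the reduced equation follows.
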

\begin{proof}
One can easily see from (36) that if the conditions (67) are satisfied, then the bilinear forms $(\psi\overline{\psi})_p$ satisfy the following equation
\begin{eqnarray}
\nabla_{X_a}(\psi\overline{\psi})_p&=&\frac{1}{p+1}i_{X_a}d(\psi\overline{\psi})_p-\frac{1}{m-p+1}e_a\wedge\delta(\psi\overline{\psi})_p\nonumber\\
&&+\frac{1}{q+1}i_{JX_a}d^c(\psi\overline{\psi})_p-\frac{1}{m-q+1}Je_a\wedge\delta^c(\psi\overline{\psi})_p
\end{eqnarray}
which is the K\"{a}hlerian generalization of the CKY equation given in (33).
\end{proof}


\end{document}